\newtheorem{theorem}{Theorem}[section]
\newtheorem{lemma}[theorem]{Lemma}
\newtheorem{proposition}[theorem]{Proposition}
\newtheorem{corollary}[theorem]{Corollary}
\newtheorem{defn}[theorem]{Definition}
\newtheorem{lthm}{Theorem} 
\theoremstyle{remark}
\newtheorem{remark}[theorem]{Remark}
\newcommand{\fp}{\mathfrak{p}}
\newcommand{\fq}{\mathfrak{q}}
\newcommand{\ord}{\mathrm{ord}}
\newcommand{\ZZ}{\mathbb{Z}}
\newcommand{\QQ}{\mathbb{Q}}
\newcommand{\Zp}{\ZZ_p}
\newcommand{\cH}{\mathcal{H}}
\newcommand{\cL}{\mathcal{L}}
\newcommand{\Mlog}{M_{\log}}
\newcommand{\Mlogp}{M_{\log,\fp}}
\newcommand{\Mlogq}{M_{\log,\fq}}
\definecolor{Green}{rgb}{0.0, 0.5, 0.0}
\newcommand{\fr}{\mathfrak{r}}
\renewcommand{\aa}{\mathfrak{a}}
\newcommand{\cc}{\mathfrak{c}}
\newcommand{\bb}{\mathfrak{b}}
\newcommand{\dd}{\mathfrak{d}}
\newcommand{\Qp}{\QQ_p}
\newcommand{\Laa}{\cL_{\alpha\alpha}}
\newcommand{\Lab}{\cL_{\alpha\beta}}
\newcommand{\Lba}{\cL_{\beta\alpha}}
\newcommand{\Lbb}{\cL_{\beta\beta}}
\newcommand{\Lss}{\cL_{\sharp\sharp}}
\newcommand{\Lsf}{\cL_{\sharp\flat}}
\newcommand{\Lfs}{\cL_{\flat\sharp}}
\newcommand{\Lff}{\cL_{\flat\flat}}
\newcommand{\cyc}{{\mathrm{cyc}}}
\newcommand{\sL}{\mathscr{L}}
\newcommand{\cO}{\mathcal{O}}
\DeclareMathSymbol\dDelta  \mathord{bbold}{"01}
\numberwithin{equation}{section}
\title[$p$-adic Artin formalism at supersingular primes]{Artin formalism for $p$-adic $L$-functions of modular forms at non-ordinary primes}
\author[A.~Lei]{Antonio Lei}
\address{Antonio Lei\newline Department of Mathematics and Statistics\\University of Ottawa\\
150 Louis-Pasteur Pvt\\
Ottawa, ON\\
Canada K1N 6N5}
\email{antonio.lei@uottawa.ca}
\keywords{$p$-adic $L$-functions, non-ordinary primes, Artin formalism}
\subjclass[2020]{Primary: 11R23; Secondary: 11S40, 11F33}
\begin{document}
\begin{abstract} Let $p$ be an odd prime number. Let $f$ be a normalized Hecke eigen-cuspform that is non-ordinary at $p$. Let $K$ be an imaginary quadratic field in which $p$ splits. We study the Artin formalism for the two-variable signed $p$-adic $L$-functions attached to $f$ over $K$. In particular, we give evidence of a prediction made by Castella--Ciperiani--Skinner--Sprung.
\end{abstract}

\maketitle

\section{Introduction}\label{sec:intro}
Throughout, we fix an odd prime number $p$. Let $f=\sum_{n\ge1} a_nq^n$ be a normalized Hecke eigen-cuspform of weight $2$, level $N$ and nebentype $\epsilon$. We assume that $p$ does not divide $N$. We let $F$ be the completion of a number field that contains $a_n$ for all $n\ge1$ at a fixed prime above $p$. Its ring of integers is denoted by $\cO$.

We assume that $v_p(a_p)\ne 0$ (that is, $f$ is non-ordinary at $p$). Here, $v_p$ is the $p$-adic valuation on $\overline{F}$ normalized by $v_p(p)=1$. Let $\alpha$ and $\beta$ be the two roots of the Hecke polynomial of $f$ at $p$
\[
X^2-a_pX+\epsilon(p)p.
\]
By enlarging $F$ if necessary, we assume that $\alpha,\beta\in F$.

Let $\Gamma$ be the Galois group of the cyclotomic $\Zp$-extension of $\QQ$. For each $\lambda\in\{\alpha,\beta\}$, it follows from the works of Amice-V\'elu \cite{amicevelu75} and Vi\v{s}ik \cite{visik76} that there exists an $F$-valued distribution on $\Gamma$, which we denote by $\cL_\lambda$, that interpolates the complex $L$-values of $f$ twisted by Dirichlet characters factoring through $\Gamma$. We shall refer to $\cL_\lambda$ the (one-variable) $p$-adic $L$-function of $f$ associated with $\lambda$.

When $a_p=0$, Pollack \cite{pollack03} showed that the distributions $\cL_\lambda$ can be decomposed into linear combinations of bounded measures:
\begin{equation}
    \cL_\lambda=\log^+\cL_++\lambda\log^-\cL_-,\label{eq:pollack}
\end{equation}
where $\cL_\pm\in\cO\llbracket \Gamma\rrbracket$ and $\log^\pm$ are certain explicit logarithmic distributions. The elements $\cL_\pm$ are commonly refereed as the plus and minus $p$-adic $L$-functions of $f$. They are utilized in the formulation of Iwasawa main conjectures, as studied in \cite{kobayashi03,pollackrubin04}.

For general $a_p$ (that is, $v_p(a_p)>0$, but not necessarily $a_p=0$), the decomposition \eqref{eq:pollack} has been generalized by Sprung \cite{sprung09} (for elliptic curves) and by the present author together with Loeffler and Zerbes in \cite{LLZ0,LLZ3} (for general modular forms). More specifically, it has been shown that there exist $\cL_\sharp,\cL_\flat\in\cO\llbracket\Gamma\rrbracket$ such that
\begin{equation}
\begin{bmatrix}
    \cL_\alpha&\cL_\beta
\end{bmatrix}=
\begin{bmatrix}
    \cL_\sharp&\cL_\flat
\end{bmatrix}\Mlog,
\label{eq:sprung}    
\end{equation}
where $\Mlog$ is an explicit $2\times 2$ matrix. We review the construction of $\Mlog$ in \S\ref{S:log} below.

In \cite{skinnerurbanmainconj}, Skinner--Urban proved the Iwasawa main conjecture for $p$-ordinary modular forms by passing to Iwasawa main conjectures of Hida families of modular forms over the $\Zp^2$-extension of an imaginary quadratic field $K$ where $p$ splits. One of the crucial ingredients in their proof is the Artin formalism of two-variable $p$-adic $L$-functions, which allows them to pass from the arithmetic objects over the $\Zp^2$-extension of $K$ attached to $f$, back to the counterparts over the cyclotomic $\Zp$-extension of $\QQ$. In recent years, tremendous progress towards the Iwasawa main conjecture in the non-ordinary case has been made by several authors; see, in particular, \cite{wan14, sprung16, CCSS}. As in the ordinary case, the Artin formalism plays a crucial role. We describe the relevant two-variable objects below.

Let $K$ be an imaginary quadratic field where $p$ splits into two primes $\fp$ and $\fq$. Let $G$ be the Galois group of the $\Zp^2$-extension of $K$. Loeffler \cite{loeffler13} constructed four two-variable $p$-adic $L$-functions $\cL_{\lambda\mu}$, where $\lambda,\mu\in\{\alpha,\beta\}$. These $p$-adic $L$-functions are $F$-valued distributions on $G$, which interpolate the complex $L$-values of $f/K$ (the base-change of $f$ to $K$) twisted by finite Hecke characters factoring through $G$.

For $\fr\in\{\fp,\fq\}$, let $\overline\fr$ denote the complex conjugation of $\fr$. Let $\Gamma_\fr$ denote the Galois group of the $\Zp$-extension of $K$ where $\overline\fr$ is unramified. When $a_p=0$, Loeffler showed that the two-variable $p$-adic $L$-functions $\cL_{\lambda\mu}$ satisfy a decomposition similar to\eqref{eq:pollack}:
\[
\cL_{\lambda\mu}=\log_\fp^+\log_\fq^+\cL_{++}+\mu\log_\fp^+\log_\fq^-\cL_{+-}+\lambda\log_\fp^-\log_\fq^+\cL_{-+}+\lambda\mu\log_\fp^-\log_\fq^-\cL_{--},
\]
where $\cL_{\pm\pm}\in\cO\llbracket G\rrbracket$ and $\log^\pm_\fr$ are distributions on $\Gamma_\fr$, generalizing Pollack's $\log^\pm$. (In \cite{loeffler13}, the normalization is slightly different from ours, and it was only proved that $
\cL_{\pm\pm}\in \cO\llbracket G\rrbracket\otimes_\cO F$; the integrality statement follows from an appropriate choice of period.)

The author of the present article generalized Loeffler's decomposition to the general $a_p$ case in \cite{lei14}, proving a two-variable version of \eqref{eq:sprung}:
   \begin{equation}\label{eq:lei}
    \begin{bmatrix}
        \Laa & \Lab\\
        \Lba & \Lbb
    \end{bmatrix}=\Mlogp^T
    \begin{bmatrix}
        \Lss & \Lsf\\
        \Lfs & \Lff
    \end{bmatrix}\Mlogq   
   \end{equation}
    where $\cL_{\bullet\star}\in\cO\llbracket G\rrbracket$ for $\bullet,\star\in\{\sharp,\flat\}$, $\Mlogp$ and $\Mlogq$ are defined as in Definition~\ref{def:log-twist}.

We now describe results on the Artin formalism.
Identifying the Galois group of the cyclotomic $\Zp$-extension of $K$ with $\Gamma$, each $\cL_{\lambda,\mu}$ gives rise to an $F$-valued distribution on $\Gamma$ under the natural projection $G\rightarrow \Gamma$ (often referred to as the cyclotomic specialization). We denote this distribution by $\cL_{\lambda\mu}^\cyc$.
In \cite{BSW}, Barrera Salazar and Williams showed that under an appropriate choice of periods, when $\lambda=\mu$, the  Artin formalism holds for $\cL_{\lambda\lambda}$, that is, for $\lambda\in\{\alpha,\beta\}$,
\begin{equation}\label{eq:BSW}
\cL_{\lambda\lambda}^\cyc=\cL_\lambda\sL_\lambda,    
\end{equation}
where $\sL_\lambda$ is the one-variable $p$-adic $L$-function of the twist of $f$ by the quadratic character attached to $K$. More precisely, it is the modular form $f^{(K)}=\sum_{n\ge1} a_n\epsilon_K(n)q^n$, where $\epsilon_K$ is the quadratic character attached to $K$. Note that since $p$ is assumed to be split completely in $K$, the Hecke polynomial of $f^{(K)}$ at $p$ coincides with that of $f$. We note that the proof of \eqref{eq:BSW} relies on passing to $p$-adic $L$-functions attached to Coleman families for which we have interpolation formulae at a larger family of characters of $\Gamma$.

In \cite[Proposition~8.4]{leipalvannan}, Loeffler's signed $p$-adic $L$-functions $\cL_{++}$ and $\cL_{--}$ also satisfy the Artin formalism, that is, under an appropriate choice of periods,
\begin{equation}
\label{eq:LP}\cL_{\bullet\bullet}^\cyc=\cL_\bullet\sL_\bullet    
\end{equation}
for $\bullet\in\{+,-\}$. The proof of \eqref{eq:LP} relies on evaluating the elements on the two sides of the equations at Dirichlet characters whose conductors are even or odd powers of $p$, depending on the choice of $\bullet$.

The first main result of the present article is the following generalization of \eqref{eq:LP}, which removes the hypothesis that $a_p=0$.

\begin{lthm}\label{thmA}
Let $f=\sum_{n\ge1}a_nq^n$ be a normalized Hecke eigen-cuspform of weight $2$ and level $N$. Assume that $p$ is an odd prime such that $p\nmid N$ and $v_p(a_p)>\frac1p$. For $\bullet\in\{\sharp,\flat\}$, the following equation holds
    \[
\cL_{\bullet\bullet}^\cyc=\cL_\bullet\sL_\bullet.
\]
\end{lthm}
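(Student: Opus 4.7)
The plan is to adapt the character-evaluation strategy of \cite{leipalvannan} from the plus/minus setting to the signed $\sharp/\flat$ setting. Both $\cL_{\bullet\bullet}^\cyc$ and $\cL_\bullet\sL_\bullet$ lie in the Iwasawa algebra $\cO\llbracket\Gamma\rrbracket$, so it suffices to verify the claimed identity at a Zariski-dense subset of finite-order characters of $\Gamma$.

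I would first combine the matrix decompositions \eqref{eq:sprung} and \eqref{eq:lei}. Under the cyclotomic projection $G \twoheadrightarrow \Gamma$, both $\Mlogp$ and $\Mlogq$ descend to the common matrix $\Mlog$, so \eqref{eq:lei} specializes to
\[
\begin{bmatrix} \cL_{\alpha\alpha}^\cyc & \cL_{\alpha\beta}^\cyc \\ \cL_{\beta\alpha}^\cyc & \cL_{\beta\beta}^\cyc \end{bmatrix} = \Mlog^T \begin{bmatrix} \cL_{\sharp\sharp}^\cyc & \cL_{\sharp\flat}^\cyc \\ \cL_{\flat\sharp}^\cyc & \cL_{\flat\flat}^\cyc \end{bmatrix} \Mlog.
\]
Since $p$ splits in $K$, the Hecke polynomial of $f^{(K)}$ at $p$ equals that of $f$, so \eqref{eq:sprung} applies to both forms with the same matrix $\Mlog$. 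Writing $\Mlog = \begin{pmatrix} a & b \\ c & d \end{pmatrix}$ with rows indexed by $\{\sharp,\flat\}$ and columns by $\{\alpha,\beta\}$, this yields $\cL_\alpha = a\cL_\sharp + c\cL_\flat$ and $\sL_\alpha = a\sL_\sharp + c\sL_\flat$.

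The key observation is as follows. Suppose $\chi$ is a finite-order character of $\Gamma$ with $a(\chi) = 0$ and $c(\chi) \neq 0$. Then \eqref{eq:sprung} gives $\cL_\alpha(\chi) = c(\chi)\cL_\flat(\chi)$ and $\sL_\alpha(\chi) = c(\chi)\sL_\flat(\chi)$, so \eqref{eq:BSW} yields $\cL_{\alpha\alpha}^\cyc(\chi) = c(\chi)^2\,\cL_\flat(\chi)\,\sL_\flat(\chi)$. On the other hand, expanding the $(1,1)$-entry of the specialized matrix equation gives
\[
\cL_{\alpha\alpha}^\cyc(\chi) = a(\chi)^2\cL_{\sharp\sharp}^\cyc(\chi) + a(\chi)c(\chi)\bigl(\cL_{\sharp\flat}^\cyc(\chi)+\cL_{\flat\sharp}^\cyc(\chi)\bigr) + c(\chi)^2\cL_{\flat\flat}^\cyc(\chi) = c(\chi)^2\cL_{\flat\flat}^\cyc(\chi),
\]
since $a(\chi) = 0$ kills the first two terms. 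Cancelling $c(\chi)^2$, we deduce $\cL_{\flat\flat}^\cyc(\chi) = \cL_\flat(\chi)\sL_\flat(\chi)$. A symmetric argument using characters at which $c$ vanishes while $a$ does not handles $\cL_{\sharp\sharp}^\cyc$.

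The main obstacle is verifying that the loci $\{\chi : a(\chi)=0,\ c(\chi)\neq 0\}$ and $\{\chi : c(\chi)=0,\ a(\chi)\neq 0\}$ are each Zariski-dense in the character variety of $\Gamma$. In the plus/minus case, the entries of $\Mlog$ are scalar multiples of $\log^+$ and $\log^-$, whose zero loci are characters of even and odd $p$-power conductor, both visibly Zariski-dense. For general $a_p$, the entries of $\Mlog$ come from the iterated Loeffler--Zerbes construction reviewed in \S\ref{S:log}, and exhibiting an analogous vanishing pattern is the bulk of the technical work. The hypothesis $v_p(a_p) > 1/p$ presumably enters here via the growth estimates that control the entries of $\Mlog$ and ensure the signed $L$-functions $\cL_\bullet$ remain bounded.
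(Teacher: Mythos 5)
There is a genuine gap, and it sits exactly where you placed your ``main obstacle'': the vanishing pattern you need does not exist when $a_p\neq 0$. Your argument requires infinitely many finite-order characters $\chi$ of $\Gamma$ at which one entry of $\Mlog$ vanishes while another does not. But Proposition~\ref{prop:eval} of the paper computes $\ord_p\left(\Mlog(\varpi_n)\right)$ for every $n\ge 2$ and finds that \emph{all four} entries have finite valuation there, i.e.\ none of them vanishes at any character of conductor $p^{n+1}$ with $n\ge2$. The loci $\{\chi: \aa(\chi)=0\}$ and $\{\cc(\chi)=0\}$ among finite-order characters are therefore finite (essentially empty), not Zariski-dense, and the hypothesis $v_p(a_p)>1/p$ does not rescue this --- it is not a boundedness condition on $\cL_\bullet$ but an input to the valuation estimates. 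The paper states this obstruction explicitly in the introduction (``the only character where such formulae are available is the trivial character when $a_p\ne0$'') and again in \S\ref{S:ap0}. Your scheme is precisely the $a_p=0$ proof of \eqref{eq:LP}, where the entries of $\Mlog$ are scalar multiples of $\log^{\pm}$ and do vanish at characters of even or odd conductor; that mechanism is special to $a_p=0$.

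The paper's actual route replaces ``vanishing'' by ``precise non-vanishing''. Starting from the same specialized matrix identity you wrote down, it forms the combination $\bb\dd\times\eqref{eq:aa}-\aa\cc\times\eqref{eq:bb}$ to eliminate the mixed terms, rewrites the left-hand side via \eqref{eq:sprungsprung}, and divides by $\det(\Mlog)=\aa\dd-\bb\cc$ to obtain the single relation $\aa\bb(\Lss^\cyc-\cL_\sharp\sL_\sharp)=\cc\dd(\Lff^\cyc-\cL_\flat\sL_\flat)$. If either difference were non-zero, Weierstrass preparation would pin down the asymptotic valuation of each side at $\varpi_n$; Proposition~\ref{prop:eval} shows the two sides grow like $2(r+t_n^+)$ versus $2t_n^-$ (plus constants) for even $n$, with the roles swapped for odd $n$. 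Comparing the two parities forces $r=\frac{1}{p+1}$, contradicting $r>\frac1p$. If you want to salvage your write-up, you would need to replace the character-evaluation step by this valuation-asymptotics argument; as it stands, the key step of your proof cannot be carried out.
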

Unlike the $a_p=0$ case, where $\cL_{++}$ and $
\cL_{--}$ satisfy interpolation formulae at infinitely many finite characters of $\Gamma$, the only character where such formulae are available is the trivial character when $a_p\ne0$. In particular, the proof of \eqref{eq:LP} cannot be applied to the general case. Instead, our proof of Theorem~\ref{thmA} relies on a careful analysis of the $p$-adic valuations of the entries of $\Mlog$ evaluated at finite characters of $\Gamma$, combined with elementary manipulations on the equations \eqref{eq:sprung}, \eqref{eq:lei} \eqref{eq:BSW}. In \S\ref{S:ap0}, we specialize to the case where $a_p=0$. We show how the proof of Theorem~\ref{thmA} is simplified in this special case and how this provides an alternative proof of \eqref{eq:LP} without directly utilizing interpolation formulae of the plus and minus $p$-adic $L$-functions.

We now turn our attention to the $p$-adic $L$-functions of ``mixed type". In \cite[Proposition~3.5 and (3.6)]{CCSS} it is predicted that
\[
\cL_{\alpha\beta}^\cyc\stackrel?=\cL_{\alpha\beta}^\cyc\stackrel?=\frac{1}{2}\left(\cL_\alpha\sL_\beta+\cL_\alpha\sL_\beta\right),
\]
and that
\[
\Lsf^\cyc\stackrel?=\Lfs^\cyc\stackrel?=\frac{1}{2}\left(\cL_\sharp\sL_\flat+\cL_\flat\sL_\sharp\right).
\]
The authors of \cite{CCSS} assert that these predictions should follow from the existence of two-variable Beilinson--Flach elements (similar to those considered in \cite{castellaJLMS,BL-IMRN,BLLV}, which, in turn, are based on the works \cite{LLZ1,LLZ2,KLZ1,KLZ2}), together with a hypothetical comparison with the Beilinson--Kato elements constructed in \cite{kato04}.
We prove that as a consequence of Theorem~\ref{thmA}, a weaker form of these predictions holds.

\begin{lthm}\label{thmB}
Let $f$ be as in Theorem~\ref{thmA}. The following equations hold
    \begin{align*}
        \cL_{\alpha\beta}^\cyc+\cL_{\alpha\beta}^\cyc&=\cL_{\alpha}\sL_\beta+\cL_{\beta}\sL_{\alpha},\\
        \cL_{\sharp\flat}^\cyc+\cL_{\flat\sharp}^\cyc&=\cL_{\sharp}\sL_\flat+\cL_{\flat}\sL_{\sharp}.
    \end{align*}
\end{lthm}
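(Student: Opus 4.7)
The plan is to pass the matrix factorization \eqref{eq:lei} down to the cyclotomic variable and compare it with the analogous ``predicted'' factorization. Since both projections $\Gamma_\fp\twoheadrightarrow\Gamma$ and $\Gamma_\fq\twoheadrightarrow\Gamma$ identify $\Mlogp$ and $\Mlogq$ with the one-variable matrix $\Mlog$, the cyclotomic specialization of \eqref{eq:lei} reads
\begin{equation*}
\begin{bmatrix}\Laa^\cyc & \Lab^\cyc \\ \Lba^\cyc & \Lbb^\cyc\end{bmatrix}
=\Mlog^T\begin{bmatrix}\Lss^\cyc & \Lsf^\cyc \\ \Lfs^\cyc & \Lff^\cyc\end{bmatrix}\Mlog,
\end{equation*}
while applying \eqref{eq:sprung} to $f$ and to the quadratic twist $f^{(K)}$ (which, by the remark in the introduction, shares the same Hecke polynomial at $p$ and hence the same $\Mlog$) and taking the outer product of the two resulting row vectors gives
\begin{equation*}
\begin{bmatrix}\cL_\alpha\sL_\alpha & \cL_\alpha\sL_\beta \\ \cL_\beta\sL_\alpha & \cL_\beta\sL_\beta\end{bmatrix}
=\Mlog^T\begin{bmatrix}\cL_\sharp\sL_\sharp & \cL_\sharp\sL_\flat \\ \cL_\flat\sL_\sharp & \cL_\flat\sL_\flat\end{bmatrix}\Mlog.
\end{equation*}

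Subtracting these two matrix identities, the diagonal entries on the left vanish by the Barrera Salazar--Williams relation \eqref{eq:BSW}, while the diagonal entries of the central matrix on the right vanish by Theorem~\ref{thmA}. Writing $\Mlog=(m_{ij})$, $b:=\Lsf^\cyc-\cL_\sharp\sL_\flat$, and $c:=\Lfs^\cyc-\cL_\flat\sL_\sharp$, a direct computation yields
\begin{equation*}
\Mlog^T\begin{bmatrix} 0 & b \\ c & 0 \end{bmatrix}\Mlog
=\begin{bmatrix} m_{11}m_{21}(b+c) & m_{11}m_{22}b+m_{12}m_{21}c \\ m_{12}m_{21}b+m_{11}m_{22}c & m_{12}m_{22}(b+c) \end{bmatrix}.
\end{equation*}
The $(1,1)$-entry of the subtracted equation therefore reduces to $m_{11}m_{21}(b+c)=0$. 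Since the distribution algebra on $\Gamma$ is an integral domain and $m_{11},m_{21}$ are non-zero (an explicit fact from the construction of $\Mlog$ reviewed in \S\ref{S:log}), this forces $b+c=0$, which is exactly the second equation of Theorem~\ref{thmB}. Adding the $(1,2)$- and $(2,1)$-entries of the subtracted equation then yields $\Lab^\cyc+\Lba^\cyc-(\cL_\alpha\sL_\beta+\cL_\beta\sL_\alpha)=(m_{11}m_{22}+m_{12}m_{21})(b+c)=0$, which is the first equation.

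The only step that is not purely formal is the non-vanishing assertion $m_{11}m_{21}\ne 0$; this should be transparent from the explicit logarithmic shape of the entries of $\Mlog$, and if it were inconvenient to check, the $(2,2)$-entry provides a backup route via $m_{12}m_{22}\ne 0$. All remaining steps are linear-algebraic consequences of Theorem~\ref{thmA}, \eqref{eq:BSW}, and the matrix factorizations \eqref{eq:lei}, \eqref{eq:sprung}, so I expect the write-up to be short once this non-vanishing is in hand.
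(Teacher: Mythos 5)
Your proposal is correct and is essentially the paper's own argument written in matrix form: your $(1,1)$-entry computation $m_{11}m_{21}(b+c)=0$ is exactly the paper's manipulation of equation \eqref{eq:aa} via \eqref{eq:sprungsprung}, and your sum of the $(1,2)$- and $(2,1)$-entries is exactly the paper's sum of \eqref{eq:ab} and \eqref{eq:ba}. The non-vanishing $m_{11}m_{21}=\aa\cc\ne0$ that you flag as the one non-formal step is precisely Corollary~\ref{cor:nonzero}, which the paper derives from the valuation computation in Proposition~\ref{prop:eval}.
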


We note that the condition of $v_p(a_p)>\frac{1}{p}$ is required in our calculations of $p$-adic valuations of the entries of $\Mlog$ evaluated at finite characters of $\Gamma$. This condition was also featured in our previous work \cite{LLZ3}. It should be possible to relax this condition by a more arduous analysis of these valuations.

\subsection*{Acknowledgement}
We thank Florian Sprung for the interesting discussions during the preparation of the article. 
The author's research is supported by the NSERC Discovery Grant Program RGPIN-2020-04259.

\section{The logarithmic matrix}\label{S:log}
We fix the notation that will be used in the remainder of the article.
\begin{defn}
    Let $\cH$ be the ring of power series in $F\llbracket X\rrbracket$ that converge on the open unit disk.
\end{defn}
The ring $\cH$ is equipped with the standard sup norm. We identify $\cH$ with the set of $F$-valued distributions on $\Gamma$ and $1+X$ is regarded as a topological generator of $\Gamma$. The subset $\cO\llbracket\Gamma\rrbracket$ is identified with the $\cO$-valued measures on $\Gamma$.

\begin{defn}\label{def:Cs}
We define the following $2\times 2$ matrices 
$$
C=\begin{bmatrix}
    a_p&1\\-\epsilon(p)p&0
\end{bmatrix},\quad A=\begin{bmatrix}
    -1&-1\\ \beta&\alpha
\end{bmatrix}.
$$
For an integer $n\ge1$, we define 
$$C_n=\begin{bmatrix}
    a_p&1\\-\epsilon(p)\Phi_n&0
\end{bmatrix},\quad M_n=C_1\cdots C_n C^{-n-2}A,$$
where $\Phi_n$ is the $p^n$-th cyclotomic polynomial in $1+X$.
\end{defn}

As shown in \cite[Theorem~1.5]{lei14}, the entries of the matrices $M_n$ converge under the sup norm of $\cH$ as $n\rightarrow\infty$.
\begin{defn}\label{def:log-twist}
    We define the $2\times2$ matrix $\Mlog$ as the limit $\displaystyle\lim_{n\rightarrow\infty} M_n$. 

For $\fr\in\{\fp,\fq\}$, recall that $\Gamma_\fr$ is the Galois group of the $\Zp$-extension of $K$ where $\overline\fr$ is unramified. We choose a topological generator $\gamma_\fr$ of $\Gamma_\fr$ so that $\gamma_\fr$ is sent to $1+X$ under the natural projection map $G\rightarrow\Gamma$. We define $M_{\log,\fr}$ as the $2\times 2$ matrix with entries in $F\llbracket \gamma_\fr-1\rrbracket$ obtained from replacing $X$ in $\Mlog$ by $\gamma_\fr-1$.
\end{defn}

\begin{remark}\label{rk:det}
    In the proof of \cite[Theorem~1.5]{lei14}, it has been shown that $\det(\Mlog)$ is equal to, up to a non-zero scalar in $F$, $\log_p(1+X)/X$, where $\log_p$ denotes the $p$-adic logarithm.
\end{remark}

\begin{defn}
    Let $M=\begin{bmatrix}
        a&b\\c&d
    \end{bmatrix}$ be a $2\times 2$ matrix defined over $\overline F$. We define the matrix of $p$-adic valuation 
    \[
    \ord_p(M)=\begin{bmatrix}
        \ord_p(a)&\ord_p(b)\\
        \ord_p(c)&\ord_p(d)
    \end{bmatrix}.
    \]
    Note that we write $\ord_p(0)=\infty$.
\end{defn}
Let $M=\begin{bmatrix}
        a&b\\c&d
    \end{bmatrix}$ and $M'=\begin{bmatrix}
        a'&b'\\c'&d'
    \end{bmatrix}$ be two matrices defined over $\overline F$. As explained in \cite[\S4.1]{sprung13}, we may compute $\ord_p(MM')$ via the formula
\begin{equation}\label{eq:formula}
\ord_p(M)\ord_p(M')=\begin{bmatrix}
        \min(\ord_p(aa'),\ord_p(bc'))&\min(\ord_p(ab'),\ord_p(bd'))\\
        \min(\ord_p(a'c),\ord_p(c'd))&\min(\ord_p(b'c),\ord_p(dd'))
    \end{bmatrix}
\end{equation}
provided that $\ord_p(aa')\ne\ord_p(bc')$, $\ord_p(ab')\ne\ord_p(bd')$, $\ord_p(a'c)\ne\ord_p(c'd)$ and $\ord_p(b'c)\ne\ord_p(dd')$.
\begin{defn}
    For each integer $n\ge1$, we define
$\varpi_n=\zeta_{p^n}-1$, where $\zeta_{p^n}$ is a primitive $p^n$-th root of unity. Furthermore, we define the rational numbers
\begin{align*}
t_n^+&=\sum_{i=1}^{\lfloor n/2\rfloor}\frac{1}{p^{2i}},\\
t_n^-&=\sum_{i=1}^{\lfloor n/2\rfloor}\frac{1}{p^{2i-1}}.
\end{align*}
\end{defn}

\begin{remark}\label{rk:compare}
    Direct calculations show that $t_n^+<t_n^-$, and that the sequences $\{t_n^+\}_{n\ge1}$ and $\{t_n^-\}_{n\ge1}$ converge to $\frac{1}{p^2-1}$ and $\frac{p}{p^2-1}$,  respectively, as $n\rightarrow\infty$. 
\end{remark}

\begin{proposition} 
\label{prop:eval}Let $r=v_p(a_p)$ and assume that $2r>\frac1{p}$. For all integers $n\ge2$, we have:
    \[
    \ord_p\left(\Mlog(\pi_n)\right)= \begin{cases}
        \begin{bmatrix}
            r+t_n^+-(n+1)v_p(\alpha) &r+t_n^+-(n+1)v_p(\beta)\\
            t_n^- -(n+1)v_p(\alpha)&t_n^--(n+1)v_p(\beta)
        \end{bmatrix}&\text{if $n$ is even,}\\ \\
        \begin{bmatrix}
    t_n^--(n+1)v_p(\alpha)&t_n^--(n+1)v_p(\beta)\\        
            r+t_n^+-(n+1)v_p(\alpha)&r+t_n^+-(n+1)v_p(\beta)
        \end{bmatrix}&\text{otherwise.}
    \end{cases}
    \]
\end{proposition}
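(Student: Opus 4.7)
The plan is to compute $\Mlog(\varpi_n)$ as a finite matrix product and then apply the valuation formula \eqref{eq:formula} iteratively. The first, crucial, observation is that for every $k > n$,
\[
\Phi_k(\varpi_n) \;=\; \Phi_{p^k}(\zeta_{p^n}) \;=\; \Phi_p\bigl(\zeta_{p^n}^{p^{k-1}}\bigr) \;=\; \Phi_p(1) \;=\; p,
\]
so $C_k(\varpi_n) = C$ in that range. Consequently, for any $m \ge n$ the tail product $C_{n+1}(\varpi_n) \cdots C_m(\varpi_n) = C^{m-n}$ telescopes with $C^{-m-2}$, giving the closed form
\[
\Mlog(\varpi_n) \;=\; P_n(\varpi_n)\cdot C^{-n-2}A, \qquad P_n := C_1 C_2 \cdots C_n.
\]
Since $C = A\,\mathrm{diag}(\alpha,\beta)\,A^{-1}$, we rewrite $C^{-n-2} A = A \cdot \mathrm{diag}(\alpha^{-n-2},\beta^{-n-2})$, reducing the problem to the entrywise valuations of $P_n(\varpi_n)\cdot A$ together with a diagonal scaling.

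The input valuations come from a standard ramification calculation in $\Qp(\zeta_{p^m})/\Qp$: one obtains $v_p(\Phi_k(\varpi_n)) = 1/p^{n-k}$ for $1\le k < n$, while $\Phi_n(\varpi_n) = 0$. This determines $\ord_p(C_k(\varpi_n))$ for each $k\le n$. The core of the argument is then an induction on $k$ that builds up $\ord_p(C_1(\varpi_n) \cdots C_k(\varpi_n))$ by repeated application of \eqref{eq:formula}. The hypothesis $2r>1/p$, together with the fact that the values $1/p^{n-k}$ are strictly decreasing in $n-k$, ensures that at each step the minima in \eqref{eq:formula} are attained uniquely, so the formula applies without ambiguity. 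A parity pattern emerges naturally: successive multiplications by $C_k$ contribute $1/p^{n-k}$ to alternating rows, with the odd-index contributions assembling into $t_n^-$ and the even-index contributions into $t_n^+$. The final factor $C_n(\varpi_n)$ is handled slightly differently, since $\Phi_n(\varpi_n)=0$ makes two of its entries infinite; a direct computation, rather than a blind use of \eqref{eq:formula}, gives $P_n(\varpi_n)$ as a rank-one matrix whose nonzero columns are proportional to $(a_{n-1}, c_{n-1})^T$, where $a_{n-1}$ and $c_{n-1}$ have the required valuations $r+t_n^+$ and $t_n^-$ (or vice versa, depending on parity).

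The closing step multiplies on the right by $A\cdot\mathrm{diag}(\alpha^{-n-2},\beta^{-n-2})$. Here the relation $a_p = \alpha + \beta$ produces cancellations of the form $-a\,a_p + a\,\beta = -a\,\alpha$ (and similarly with $\alpha\leftrightarrow\beta$), effectively reducing the exponent in the diagonal scaling by one and accounting for the factors $(n+1)v_p(\alpha)$ and $(n+1)v_p(\beta)$ in the statement. The main obstacle I anticipate is the bookkeeping in the parity-aware induction: one must simultaneously track how the partial sums $t_n^{\pm}$ accumulate and verify at every step that the strict inequalities required by \eqref{eq:formula} are genuinely strict; the degenerate last factor $C_n(\varpi_n)$ also requires separate attention, since \eqref{eq:formula} is only fully applicable away from entries with $\ord_p = \infty$.
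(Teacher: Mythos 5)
Your proposal is correct and follows essentially the same route as the paper: reduce $\Mlog(\varpi_n)$ to the finite product $C_1\cdots C_n(\varpi_n)C^{-n-2}A$ via the telescoping $C_k(\varpi_n)=C$ for $k>n$, feed the valuations $\ord_p(C_m(\varpi_n))$ into \eqref{eq:formula}, and treat the degenerate factor $C_n(\varpi_n)$ together with $C^{-n-2}A$ by a direct computation using $a_p=\alpha+\beta$. The only difference is organizational: the paper runs its induction on $n$, peeling the leftmost valuation matrix so that the remaining product is exactly the claim for $n-1$, whereas you accumulate the partial products $C_1\cdots C_k$ from the left; both work, and the strict-inequality checks you flag do all reduce to $2r>1/p$ and $1/p^{n-2j}<1/p^{n-2j-1}$.
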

\begin{proof}
As in \cite[proof of Theorem~1.5]{lei14}, we have
    \begin{align*}
    \Mlog(\pi_n)&= C_1\cdots C_n (\varpi_n)C^{-n-2}A\\
    &= C_1\cdots C_{n-1}(\varpi_n)\begin{bmatrix}
        a_p&1\\0&0
    \end{bmatrix}\begin{bmatrix}
        -\alpha^{-n-2}&-\beta^{-n-2}\\\beta\alpha^{-n-2}&\alpha\beta^{-n-2}
    \end{bmatrix}\\
    &=C_1\cdots C_{n-1}(\varpi_n)\begin{bmatrix}
        -\alpha^{-n-1}&-\beta^{-n-1}\\0&0
    \end{bmatrix}.
    \end{align*}    
    Furthermore, as in \cite[proof of Lemma~4.5]{LLZ3},  for all $1\le m<n$, the following equation holds
    \[
    \ord_p(C_m(\pi_n))=\begin{bmatrix}
        r&0\\\frac{1}{p^{n-m}}&\infty
    \end{bmatrix}.
    \]
  Therefore, as matrices of $p$-adic valuations, we have
    \[
    \ord_p(\Mlog(\varpi_n))=\begin{bmatrix}
        r&0\\\frac{1}{p^{n-1}}&\infty
    \end{bmatrix}\cdots \begin{bmatrix}
        r&0\\\frac{1}{p}&\infty
    \end{bmatrix}\begin{bmatrix}
        -(n+1)v_p(\alpha)&-(n+1)v_p(\beta)\\\infty&\infty
    \end{bmatrix}.
    \]
Hence, it suffices to prove that the following equation of matrices of $p$-adic valuation holds:
\begin{equation}
\label{eq:claim}
\begin{bmatrix}
        r&0\\\frac{1}{p^{n-1}}&\infty
    \end{bmatrix}\cdots \begin{bmatrix}
        r&0\\\frac{1}{p}&\infty
    \end{bmatrix}\begin{bmatrix}
        a&b\\\infty&\infty
    \end{bmatrix}= 
    \begin{cases}
        \begin{bmatrix}
            r+t_n^++a &r+t_n^++b\\
            t_n^- +a&t_n^-+b
        \end{bmatrix}&\text{if $n$ is even,}\\ \\
        \begin{bmatrix}
    t_n^-+a&t_n^-+b\\        
            r+t_n^++a&r+t_n^++b
        \end{bmatrix}&\text{otherwise.}\end{cases}
\end{equation}
for any $a,b\in\QQ$.

    A direct calculation shows that \eqref{eq:claim} holds for $n\in\{2,3\}$, thanks to \eqref{eq:formula} and our assumption that $2r>\frac1{p}$. Suppose that \eqref{eq:claim} holds for $n=2k-1$. Then,
    \begin{align*}
    \begin{bmatrix}
        r&0\\\frac{1}{p^{2k-1}}&\infty
    \end{bmatrix}\cdots \begin{bmatrix}
        r&0\\\frac{1}{p}&\infty
    \end{bmatrix}\begin{bmatrix}
        a&b\\\infty&\infty
    \end{bmatrix}&=\begin{bmatrix}
        r&0\\\frac{1}{p^{2k-1}}&\infty
    \end{bmatrix} \begin{bmatrix}t_{2k-1}^-+a&t_{2k-1}^-+b\\        
            r+t_{2k-1}^++a&r+t_{2k-1}^++b
        \end{bmatrix}\\\\
        &=\begin{bmatrix}
        r+t_{2k-1}^++a&r+t_{2k-1}^++b\\r+t_{2k-1}^-+\frac{1}{p^{2k-1}}+a&r+t_{2k-1}^-+\frac{1}{p^{2k-1}}+b
        \end{bmatrix}\\ \\
        &=\begin{bmatrix}
        r+t_{2k}^++a&r+t_{2k}^++b\\r+t_{2k}^-+a&r+t_{2k}^-+b
        \end{bmatrix},
    \end{align*}
    where we have utilized \eqref{eq:formula} and the fact that $t_{2k-1}^-<t_{2k-1}^+$ as discussed in Remark~\ref{rk:compare}. In particular, \eqref{eq:claim} holds for $n=2k$. We can then show that the same is true for $n=2k+1$ by a similar calculation. Thus, \eqref{eq:claim} holds for all $n\ge 2$ by induction, which concludes the proof of the proposition.    
\end{proof}

\begin{corollary}\label{cor:nonzero}
    All four entries in the matrix $\Mlog$ are non-zero elements of $\cH$.
\end{corollary}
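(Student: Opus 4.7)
The plan is to deduce the corollary directly from Proposition~\ref{prop:eval} by a contrapositive argument. Since the entries of $\Mlog$ are elements of $\cH$ and the points $\varpi_n = \zeta_{p^n} - 1$ lie in the open unit disk (for $n \ge 1$), evaluation at $\varpi_n$ defines a well-defined $F$-valued functional on each entry. In particular, if any entry of $\Mlog$ were the zero element of $\cH$, then its value at $\varpi_n$ would be $0$ for every $n \ge 2$, which means the corresponding entry of $\ord_p(\Mlog(\varpi_n))$ would equal $\infty$.

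Next I would simply read off Proposition~\ref{prop:eval}: for every $n \ge 2$, each of the four entries of $\ord_p(\Mlog(\varpi_n))$ is a rational number of the form $r + t_n^+ - (n+1)v_p(\lambda)$ or $t_n^- - (n+1)v_p(\lambda)$ for $\lambda \in \{\alpha,\beta\}$. Since $r$, $t_n^\pm$, and $v_p(\lambda)$ are all finite rationals, every entry of the valuation matrix is finite. This contradicts the supposition that any entry of $\Mlog$ could be identically zero, so each of the four entries of $\Mlog$ must be a non-zero element of $\cH$.

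I do not expect any serious obstacle here: the proposition was precisely set up so that the valuation matrix has no $\infty$ entries, and the corollary is essentially a formal consequence. The only minor care needed is to confirm that $\varpi_n$ lies in the open unit disk so that evaluation of elements of $\cH$ at $\varpi_n$ is legitimate, but this is standard ($|\varpi_n|_p = p^{-1/(p^{n-1}(p-1))} < 1$).
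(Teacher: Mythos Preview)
Your proposal is correct and follows essentially the same approach as the paper: both deduce the result directly from Proposition~\ref{prop:eval} by observing that the entries of $\ord_p(\Mlog(\varpi_n))$ are finite rationals rather than $\infty$, so no entry of $\Mlog$ can be identically zero. Your write-up is in fact slightly more detailed than the paper's one-line justification, since you explicitly note that $\varpi_n$ lies in the open unit disk so that evaluation is legitimate.
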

\begin{proof}
    This follows from Proposition~\ref{prop:eval} since each entry of $\ord_p\left(\Mlog(\varpi_n)\right)$ is not identically $\infty$ as $n$ varies.
\end{proof}

\section{Proof of Theorem~\ref{thmA}}
\label{S:A}
To simplify the notation, we write
    \[
    \Mlog=\begin{bmatrix}
        \aa&\bb\\
        \cc&\dd
    \end{bmatrix}
    \]
    from now on.
The following lemma is a straightforward consequence of some of the results mentioned in the Introduction to this article.

\begin{lemma}We have
\[
    \begin{bmatrix}
        \cL_\alpha\sL_\alpha & \Lab^\cyc\\
        \Lba^\cyc & \cL_\beta\sL_\beta
    \end{bmatrix}=\Mlog^T
    \begin{bmatrix}
        \Lss^\cyc & \Lsf^\cyc\\
        \Lfs^\cyc & \Lff^\cyc
    \end{bmatrix}\Mlog.
    \]
    In other words,
    \begin{align}
        \cL_\alpha\sL_\alpha&=\aa^2\Lss^\cyc+\cc^2\Lff^\cyc+\aa\cc(\Lsf^\cyc+\Lff^\cyc),\label{eq:aa}\\
        \cL_\beta\sL_\beta&=\bb^2\Lss^\cyc+\dd^2\Lff^\cyc+\bb\dd(\Lsf^\cyc+\Lff^\cyc), \label{eq:bb}\\
        \Lab^\cyc&=\aa\bb\Lss^\cyc+\bb\cc\Lfs^\cyc+\aa\dd\Lsf^\cyc+\cc\dd\Lff^\cyc,\label{eq:ab}\\
        \Lba^\cyc &= \aa\bb\Lss^\cyc+\aa\dd\Lfs^\cyc+\bb\cc\Lsf^\cyc+\cc\dd\Lff^\cyc.\label{eq:ba}
    \end{align}
\end{lemma}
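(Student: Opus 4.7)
The plan is to derive the displayed matrix identity by applying the cyclotomic specialization $G \twoheadrightarrow \Gamma$ to equation \eqref{eq:lei}, and then to deduce the four scalar identities by expanding the matrix product.

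First I would analyse the effect of the cyclotomic specialization on the matrices $\Mlogp$ and $\Mlogq$. Since we have chosen $\gamma_\fp$ and $\gamma_\fq$ to both map to $1+X$ under the natural projection $G \to \Gamma$ (see Definition~\ref{def:log-twist}), the specialization sends each entry of $\Mlogp$ and of $\Mlogq$ to the corresponding entry of $\Mlog$. Hence under cyclotomic specialization the right-hand side of \eqref{eq:lei} becomes
\[
\Mlog^T
\begin{bmatrix}
    \Lss^\cyc & \Lsf^\cyc\\
    \Lfs^\cyc & \Lff^\cyc
\end{bmatrix}
\Mlog.
\]
For the left-hand side, the off-diagonal entries are $\Lab^\cyc$ and $\Lba^\cyc$ by definition of the superscript $\cyc$. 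For the diagonal entries, I would invoke the Artin formalism \eqref{eq:BSW} of Barrera Salazar--Williams, which identifies $\Laa^\cyc$ with $\cL_\alpha\sL_\alpha$ and $\Lbb^\cyc$ with $\cL_\beta\sL_\beta$. Putting these identifications together yields the desired matrix equation.

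Finally, the four scalar identities \eqref{eq:aa}--\eqref{eq:ba} are obtained by a direct expansion of the right-hand side: writing $\Mlog^T = \bigl[\begin{smallmatrix}\aa & \cc \\ \bb & \dd\end{smallmatrix}\bigr]$, one multiplies out the $2\times 2$ product and reads off each entry. There is no real obstacle here; the only thing to be careful about is tracking the roles of $\Lsf^\cyc$ and $\Lfs^\cyc$ (which enter with different coefficients in the $(1,2)$ versus $(2,1)$ entries because the middle matrix need not be symmetric), and verifying that the diagonal entries collapse to the stated expressions. Since all the ingredients---the specialization of $\Mlogp$ and $\Mlogq$, the two-variable factorization \eqref{eq:lei}, and the one-variable Artin formalism \eqref{eq:BSW}---are already available, the lemma is essentially a formal consequence.
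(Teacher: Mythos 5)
Your proposal is correct and matches the paper's proof, which likewise combines the cyclotomic specialization of \eqref{eq:lei} (sending both $\Mlogp$ and $\Mlogq$ to $\Mlog$) with the Barrera Salazar--Williams identity \eqref{eq:BSW} for the diagonal entries. Your care in tracking $\Lsf^\cyc$ versus $\Lfs^\cyc$ in the expansion is warranted: the correct $(1,1)$ entry is $\aa^2\Lss^\cyc+\cc^2\Lff^\cyc+\aa\cc(\Lsf^\cyc+\Lfs^\cyc)$, as your computation gives.
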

\begin{proof}
    This follows from combining \eqref{eq:BSW} and the cyclotomic specialization of \eqref{eq:lei}, which sends both $\Mlogp$ and $\Mlogq$ to $\Mlog$.
\end{proof}

Consider $\bb\dd\times$\eqref{eq:aa}$-\aa\cc\times$\eqref{eq:bb}:
\begin{equation}
\bb\dd\cL_\alpha\sL_\alpha-\aa\cc\cL_\beta\sL_\beta=(\aa^2\bb\dd-\bb^2\aa\cc)\Lss^\cyc+(\cc^2\bb\dd-\dd^2\aa\cc)\Lff^\cyc.
\label{eq:key}  
\end{equation}

The decomposition \eqref{eq:sprung} and its counterpart for the quadratic twist $f^{(K)}$ say that
    \begin{equation}
    \begin{bmatrix}
        \cL_\alpha&\cL_\beta
    \end{bmatrix}=\begin{bmatrix}
        \cL_\sharp&\cL_\flat
    \end{bmatrix}\begin{bmatrix}
        \aa&\bb\\
        \cc&\dd
    \end{bmatrix},\quad
    \begin{bmatrix}
        \sL_\alpha&\sL_\beta
    \end{bmatrix}=\begin{bmatrix}
        \sL_\sharp&\sL_\flat
    \end{bmatrix}\begin{bmatrix}
        \aa&\bb\\
        \cc&\dd
    \end{bmatrix}.
    \label{eq:sprungsprung}    
    \end{equation}
    Therefore, the left-hand side of \eqref{eq:key} can be rewritten as:
\[
(\aa^2\bb\dd-\bb^2\aa\cc)\cL_\sharp\sL_\sharp+(\cc^2\bb\dd-\dd^2\aa\cc)\cL_\flat\sL_\flat.
\]
In particular, we see that both sides of \eqref{eq:key} are divisible by $\aa\dd-\bb\cc$ (which is a non-zero element of $\cH$ by Remark~\ref{rk:det}). Therefore, as $\cH$ is an integral domain, dividing by $\aa\dd-\bb\cc$ on both sides of \eqref{eq:key} gives
\begin{equation}
\aa\bb(\Lss^\cyc-\cL_\sharp\sL_\sharp)=\cc\dd(\Lff^\cyc-\cL_\flat\sL_\flat).
\label{eq:key2}    
\end{equation}

Suppose that the quantities on the two sides of \eqref{eq:key2} are non-zero. Consequently, for $\bullet\in\{\sharp,\flat\}$, the difference $\cL_{\bullet\bullet}^\cyc-\cL_\bullet\sL_\bullet$ is a non-zero element of $\cO\llbracket\Gamma\rrbracket$. Let $\pi$ be a uniformizer of $\cO$. By Weierstrass' preparation theorem,
\[
\cL_{\bullet\bullet}^\cyc-\cL_\bullet\sL_\bullet=u_\bullet \pi^{\mu_\bullet} P_\bullet,
\]
for some $u_\bullet\in\cO\llbracket \Gamma\rrbracket^\times$,  $\mu_\bullet\in\ZZ_{\ge 0}$, and a distinguished polynomial $P_\bullet\in\cO[X]$.

Let $n\ge2$ be an even integer. Evaluating both sides of \eqref{eq:key2} at $X=\varpi_n$, we deduce that
\[
(\cL_{\bullet\bullet}^\cyc-\cL_\bullet\sL_\bullet)(\varpi_n)=\frac{\mu_\bullet} e+\frac{\deg(P_\bullet)}{p^{n-1}(p-1)}
\]
for $n\gg0$, where $e$ is the ramification index of $F/\Qp$. Combining this equation with Proposition~\ref{prop:eval}, gives
\begin{equation}\label{eq:even-big}
2(r+t_n^+)+\frac{\mu_\sharp} e+\frac{\deg(P_\sharp)}{p^{n-1}(p-1)}=
2t_n^-+\frac{\mu_\flat} e+\frac{\deg(P_\flat)}{p^{n-1}(p-1)}.    
\end{equation}
Let $n\rightarrow\infty$, we deduce from Remark~\ref{rk:compare} that
\begin{equation}
2\left(r-\frac{1}{p+1}\right)=\frac{\mu_\flat-\mu_\sharp} e.
\label{eq:even-limit}    
\end{equation}

Taking $n\ge3$ to be odd integers instead, we deduce similarly:
\begin{equation}
2\left(r-\frac{1}{p+1}\right)=\frac{\mu_\sharp-\mu_\flat} e.
\label{eq:odd-limit}    
\end{equation}
Equations \eqref{eq:even-limit} and \eqref{eq:odd-limit} then imply that
\begin{equation}
r=\frac{1}{p+1},\quad \mu_\sharp=\mu_\flat.    
\end{equation}
This contradicts our hypothesis that $r>\frac{1}{p}$, which concludes the proof of Theorem~\ref{thmA}.

\section{Proof of Theorem~\ref{thmB}}
We now turn our attention to the proof of Theorem~\ref{thmB}, which turns out to be a straightforward consequence of Theorem~\ref{thmA}.

Thanks to \eqref{eq:sprungsprung}, we deduce from \eqref{eq:aa}:
\[
\aa^2\cL_\sharp\sL_\sharp+\cc^2\cL_\flat\sL_\flat+\aa\cc(\cL_\sharp\sL_\flat+\cL_\flat\sL_\sharp)=\aa^2\Lss^\cyc+\cc^2\Lff^\cyc+\aa\cc(\Lsf^\cyc+\Lff^\cyc).
\]
Therefore, it follows from Theorem~\ref{thmA} that
\[
\aa\cc(\cL_\sharp\sL_\flat+\cL_\flat\sL_\sharp)=\aa\cc(\Lsf^\cyc+\Lff^\cyc).
\]
Corollary~\ref{cor:nonzero} tells us that $\aa\cc\ne0$. Thus,
\begin{equation}\label{eq:sharpflat}
\cL_\sharp\sL_\flat+\cL_\flat\sL_\sharp=\Lsf^\cyc+\Lff^\cyc,
\end{equation}
proving the second assertion of Theorem~\ref{thmB}.

If we take the sum of the equations \eqref{eq:ba} and \eqref{eq:ab}, we obtain the equality
\begin{equation}
\Lab^\cyc+\Lba^\cyc=2\aa\bb\Lss^\cyc+(\aa\dd+\bb\cc)(\Lsf^\cyc+\Lfs^\cyc)+2\cc\dd\Lff^\cyc.
\label{eq:mixed}    
\end{equation}
The right-hand side of \eqref{eq:mixed} is equal to
\[
2\aa\bb\cL_\sharp\sL_\sharp+(\aa\bb+\bb\cc)(\cL_\sharp\sL_\flat+\cL_\flat\sL_\sharp)+2\cc\dd\cL_\flat\sL_\flat,
\]
thanks to Theorem~\ref{thmA} and \eqref{eq:sharpflat}. This can be rewritten as
\[
\cL_\alpha\sL_\beta+\cL_\beta\sL_\alpha
\]
after utilizing once again \eqref{eq:sprungsprung}. Hence, the first assertion of Theorem~\ref{thmB} follows.

\section{The $a_p=0$ case}\label{S:ap0}
Assuming $a_p=0$, we discuss how our proof of Theorem~\ref{thmA} gives an alternative proof of \eqref{eq:LP}, without directly utilizing interpolation formulae of the plus and minus $p$-adic $L$-functions. 

We recall that Pollack's plus and minus logarithms are defined as
\[
\log^+=\frac{1}{p}\prod_{m\ge 1}\frac{\Phi_{2m}}{p},\quad
\log^-=\frac{1}{p}\prod_{m\ge 1}\frac{\Phi_{2m-1}}{p},
\]
where $\Phi_i$ denotes the $p^i$-th cyclotomic polynomial in $1+X$, as in Definition~\ref{def:Cs}.

One can check that \eqref{eq:key2} takes the form
\begin{equation}
(\log^+)^2(\cL_{++}^\cyc-\cL_+\sL_+)=\eta\cdot(\log^-)^2(\cL_{--}^\cyc-\cL_-\sL_-).
\label{eq:key3}    
\end{equation}
for some scalar $\eta\in F^\times$. If $\theta$ is a character of $\Gamma$ that sends $X$ to $\varpi_{2m}$, $m\ge1$, then $\log^+(\theta)=0$, but $\log^-(\theta)\ne0$. Thus, \eqref{eq:key3} implies that
\[
(\cL_{--}^\cyc-\cL_-\sL_-)(\theta)=0.
\]
Since this holds for infinitely many $\theta$ and $\cL_{--}^\cyc-\cL_-\sL_-\in\cO\llbracket\Gamma\rrbracket$, this implies that $\cL_{--}^\cyc=\cL_-\sL_-$. Similarly, we can show that $\cL_{++}^\cyc=\cL_+\sL_+$ by considering the characters that send $X$ to $\varpi_{2m-1}$, $m\ge1$. Thus, \eqref{eq:LP} follows.

When $a_p\ne0$, the vanishing of $(\cL_{\bullet\bullet}^\cyc-\cL_\bullet\sL_\bullet)(\theta)$ does not hold. Nonetheless, our explicit construction of $\Mlog$ allows us to extract information on its $p$-adic valuation via Proposition~\ref{prop:eval}, which is the key input of our argument presented in \S\ref{S:A}.

\bibliographystyle{amsalpha}
\bibliography{references}

\providecommand{\bysame}{\leavevmode\hbox to3em{\hrulefill}\thinspace}
\providecommand{\MR}{\relax\ifhmode\unskip\space\fi MR }
\providecommand{\MRhref}[2]{%
  \href{http://www.ams.org/mathscinet-getitem?mr=#1}{#2}
}
\providecommand{\href}[2]{#2}
\begin{thebibliography}{BLLV19}

\bibitem[AV75]{amicevelu75}
Yvette Amice and Jacques V{\'e}lu, \emph{Distributions {$p$}-adiques associ\'ees aux s\'eries de {H}ecke}, Journ\'ees {A}rithm\'etiques de {B}ordeaux ({C}onf., {U}niv. {B}ordeaux, {B}ordeaux, 1974), Soc. Math. France, Paris, 1975, pp.~119--131. Ast\'erisque, Nos. 24--25.

\bibitem[BL21]{BL-IMRN}
K\^{a}z\i~m B\"{u}y\"{u}kboduk and Antonio Lei, \emph{Iwasawa theory of elliptic modular forms over imaginary quadratic fields at non-ordinary primes}, Int. Math. Res. Not. IMRN (2021), no.~14, 10654--10730.

\bibitem[BLLV19]{BLLV}
K\^{a}z\i~m B\"{u}y\"{u}kboduk, Antonio Lei, David Loeffler, and Guhan Venkat, \emph{Iwasawa theory for {R}ankin-{S}elberg products of {$p$}-nonordinary eigenforms}, Algebra Number Theory \textbf{13} (2019), no.~4, 901--941.

\bibitem[BSW21]{BSW}
Daniel Barrera~Salazar and Chris Williams, \emph{Families of {B}ianchi modular symbols: critical base-change {$p$}-adic {$L$}-functions and {$p$}-adic {A}rtin formalism}, Selecta Math. (N.S.) \textbf{27} (2021), no.~5, Paper No. 82, 45.

\bibitem[Cas17]{castellaJLMS}
Francesc Castella, \emph{{$p$}-adic heights of {H}eegner points and {B}eilinson-{F}lach classes}, J. Lond. Math. Soc. (2) \textbf{96} (2017), no.~1, 156--180.

\bibitem[CCSS18]{CCSS}
Francesc Castella, Mirela Ciperiani, Christopher Skinner, and Florian Sprung, \emph{The {I}wasawa main conjectures for modular forms at non-ordinary primes}, 2018, arXiv:1804.10993.

\bibitem[Kat04]{kato04}
Kazuya Kato, \emph{{$p$}-adic {H}odge theory and values of zeta functions of modular forms}, Ast\'erisque (2004), no.~295, ix, 117--290, Cohomologies $p$-adiques et applications arithm{\'e}tiques. III.

\bibitem[KLZ17]{KLZ2}
Guido Kings, David Loeffler, and Sarah~Livia Zerbes, \emph{Rankin-{E}isenstein classes and explicit reciprocity laws}, Camb. J. Math. \textbf{5} (2017), no.~1, 1--122.

\bibitem[KLZ20]{KLZ1}
Guido Kings, David Loeffler, and Sarah Zerbes, \emph{Rankin-{E}isenstein classes for modular forms}, Amer. J. Math. \textbf{142} (2020), no.~1, 79--138.

\bibitem[Kob03]{kobayashi03}
Shinichi Kobayashi, \emph{Iwasawa theory for elliptic curves at supersingular primes}, Invent. Math. \textbf{152} (2003), no.~1, 1--36.

\bibitem[Lei14]{lei14}
Antonio Lei, \emph{Factorisation of two-variable {$p$}-adic {$L$}-functions}, Canad. Math. Bull. \textbf{57} (2014), no.~4, 845--852.

\bibitem[LLZ10]{LLZ0}
Antonio Lei, David Loeffler, and Sarah~Livia Zerbes, \emph{Wach modules and {I}wasawa theory for modular forms}, Asian J. Math. \textbf{14} (2010), no.~4, 475--528.

\bibitem[LLZ14]{LLZ1}
\bysame, \emph{Euler systems for {R}ankin-{S}elberg convolutions of modular forms}, Ann. of Math. (2) \textbf{180} (2014), no.~2, 653--771.

\bibitem[LLZ15]{LLZ2}
\bysame, \emph{Euler systems for modular forms over imaginary quadratic fields}, Compos. Math. \textbf{151} (2015), no.~9, 1585--1625.

\bibitem[LLZ17]{LLZ3}
\bysame, \emph{On the asymptotic growth of {B}loch-{K}ato-{S}hafarevich-{T}ate groups of modular forms over cyclotomic extensions}, Canad. J. Math. \textbf{69} (2017), no.~4, 826--850.

\bibitem[Loe14]{loeffler13}
David Loeffler, \emph{$p$-adic integration on ray class groups and non-ordinary $p$-adic $l$-functions}, Iwasawa Theory 2012: State of the Art and Recent Advances, edited by Bouganis and Otmar Venjakob, Springer Berlin Heidelberg, 2014, pp.~357--378.

\bibitem[LP19]{leipalvannan}
Antonio Lei and Bharathwaj Palvannan, \emph{Codimension two cycles in {I}wasawa theory and elliptic curves with supersingular reduction}, Forum Math. Sigma \textbf{7} (2019), Paper No. e25, 81.

\bibitem[Pol03]{pollack03}
Robert Pollack, \emph{On the {$p$}-adic {$L$}-function of a modular form at a supersingular prime}, Duke Math. J. \textbf{118} (2003), no.~3, 523--558.

\bibitem[PR04]{pollackrubin04}
Robert Pollack and Karl Rubin, \emph{The main conjecture for {CM} elliptic curves at supersingular primes}, Ann. of Math. (2) \textbf{159} (2004), no.~1, 447--464.

\bibitem[Spr12]{sprung09}
Florian E.~Ito Sprung, \emph{Iwasawa theory for elliptic curves at supersingular primes: a pair of main conjectures}, J. Number Theory \textbf{132} (2012), no.~7, 1483--1506.

\bibitem[Spr13]{sprung13}
Florian Sprung, \emph{The \v{S}afarevi\v{c}-{T}ate group in cyclotomic {$\Bbb Z_p$}-extensions at supersingular primes}, J. Reine Angew. Math. \textbf{681} (2013), 199--218.

\bibitem[Spr16]{sprung16}
Florian E.~Ito Sprung, \emph{The {I}wasawa main conjecture for odd supersingular primes}, arXiv:1610.10017, 2016.

\bibitem[SU14]{skinnerurbanmainconj}
Christopher Skinner and Eric Urban, \emph{The {I}wasawa main conjectures for {$\rm GL_2$}}, Invent. Math. \textbf{195} (2014), no.~1, 1--277.

\bibitem[Vi{\v{s}}76]{visik76}
M.~M. Vi{\v{s}}ik, \emph{Nonarchimedean measures associated with {D}irichlet series}, Mat. Sb. (N.S.) \textbf{99(141)} (1976), no.~2, 248--260, 296.

\bibitem[Wan14]{wan14}
Xin Wan, \emph{{I}wasawa {M}ain {C}onjecture for {S}upersingular {E}lliptic {C}urves}, 2014, ar{X}iv:1411.6352.

\end{thebibliography}

\end{document}